\newcolumntype{C}{>{$}c<{$}}
\newtheorem{theorem}{Theorem}
\newtheorem{lemma}{Lemma}
\newtheorem{rem}{Remark}
\newtheorem{exmp}{Example}
\begin{document}

\title{Connected sums of $Z$-knotted triangulations}

\begin{abstract}
An embedded graph is called $z$-knotted if it contains a unique zigzag (up to reversing).
We consider $z$-knotted triangulations, i.e. $z$-knotted embedded graphs whose faces are triangles,
and describe all cases when the connected sum of two $z$-knotted triangulations is $z$-knotted.
\end{abstract}
\author{Mark Pankov, Adam Tyc}
\subjclass[2000]{}
\keywords{embeded graph, triangulation, zigzag}
\address{Faculty of Mathematics and Computer Science, 
University of Warmia and Mazury, S{\l}oneczna 54, Olsztyn, Poland}
\email{pankov@matman.uwm.edu.pl, adam121994@wp.pl}

\maketitle

\hfill{\it To the memory of Michel Marie Deza}

\section{Introduction}
The notion of {\it Petrie polygon} for polytopes was introduced in Coxeter's book \cite{Cox73}
(see also \cite{DDS-paper,DP, Williams} and \cite[Chapter 8]{D-DS} for some generalizations).
For graphs embedded in $2$-dimensional surfaces the same objects appear as 
{\it zigzags} in \cite{zig1, zig2, D-DS},  {\em geodesics} in  \cite{GrunMo63} and  {\it left-right paths} in \cite{Sh75}.
Following \cite{zig1,zig2,DDS-paper,D-DS} we call them {\it zigzags}.

A zigzag is a cyclic sequence of vertices in an embedded graph $\Gamma$,
where any two consecutive vertices form an edge, any three consecutive vertices are mutually distinct vertices on a face
and any four consecutive vertices are not on the same face.
For every zigzag the reversed sequence also is a zigzag and we do not distinguish such two zigzags.
When there is a unique zigzag (up to reversing), we say that $\Gamma$ is $z$-{\it knotted}.

The notion of zigzag is closely connected to the notion of {\it central circuit} in a $4$-regular embedded graph.
Zigzags and central circuits are interesting for many reasons. 
For example, there is an application to knot theory related to projections of links \cite[Section 1.3]{D-DS}.
In particular, $z$-knotted embedded graphs connect to the {\it Gauss code problem} as follows.
The medial graph of such a graph has a unique central circuit,
and this medial graph can be considered as a closed  curve having simple self-intersections only,
i.e. it is a realization of a certain Gauss code on the corresponding surface
(see \cite{LinsOL-Silva}).

The main objects of this paper are $z$-knotted {\it triangulations} ($z$-knotted embed\-ded graphs whose faces are triangles). 
Note that the dual objects, i.e. $z$-knotted $3$-re\-gular graphs, are considered in \cite[Section 2.3]{D-DS}.
A simple example of $z$-knotted triangulation is the graph of the $n$-gonal  bipyramid when $n$ is odd 
(this graph is not $z$-knotted if $n$ is even).
Examples of $z$-knotted fullerenes can be found in \cite[Section 2.3]{D-DS} (the dual graphs are triangulations).
In this paper, we show how construct more examples. 

The connected sum of manifolds  is a well-known topological notion.
In a similar way, we define the connected sum of triangulations $\Gamma$ and $\Gamma'$
as the gluing by two distinguished faces $F$ and $F'$ (in $\Gamma$ and $\Gamma'$, respectively).
This is a triangulation in the connected sum of the surfaces containing $\Gamma$ and $\Gamma'$,
and it depends on an identification homeomorphism between $\partial F$ and $\partial F'$.
We describe all cases when the connected sum of two $z$-knotted triangulations is $z$-knotted.

\section{Zigzags in embedded graphs}
Let $M$ be a connected closed $2$-dimensional surface (not necessarily orientable).
Let also $\Gamma$ be a finite simple connected graph embedded in $M$ such that every vertex of $\Gamma$ is a point on $M$,
every edge is homeomorphic to the segment $[0;1]$ and any two edges are disjoint or intersecting in a  vertex.
The {\it faces} of $\Gamma$ are defined as the closures of the connected components of $M\setminus \Gamma$.
We suppose that our graph embedding is a {\it  closed $2$-cell embedding},  i.e. every face is homeomorphic to  a closed $2$-dimensional disc.
Also, we require that the following two conditions hold:
\begin{enumerate}
\item[$\bullet$] every edge is contained in precisely  two distinct faces,
\item[$\bullet$] the intersection of two distinct faces is an edge or a vertex or empty.
\end{enumerate}
A {\it zigzag}  in $\Gamma$ is a sequence of vertices $\{x_{i}\}_{i\in {\mathbb N}}$ satisfying the following three conditions for every $i\in {\mathbb N}$:
\begin{enumerate}
\item[(Z1)] $x_{i},x_{i+1}$ are distinct vertices on a certain edge,
\item[(Z2)] there is a unique face containing $x_{i},x_{i+1},x_{i+2}$,
\item[(Z3)] the faces containing $x_{i},x_{i+1},x_{i+2}$ and $x_{i+1},x_{i+2},x_{i+3}$ are distinct.
\end{enumerate} 
Let $Z=\{x_{i}\}_{i\in {\mathbb N}}$ be a zigzag in $\Gamma$.
Since our graph is finite, there is a natural number $n>3$ such that $x_{i+n}=x_{n}$ for every $i\in {\mathbb N}$. 
The smallest number $n$ satisfying this condition is called the {\it length} of $Z$.
So, our zigzag is the cyclic sequence $x_{1},\dots, x_{n}$, where $n$ is the length of $Z$.
For every natural $j$ the sequence $\{x_{i+j}\}_{i\in {\mathbb N}}$ is a zigzag,
and all such zigzags are identified with $Z$.

Suppose that $x_{1},x_{2},x_{3}$ are three distinct vertices on a certain face and $x_{2}$ is adjacent to both $x_{1},x_{3}$. 
Consider the second face containing $x_{2},x_{3}$.
It contains the unique vertex $x_4$ adjacent to $x_{3}$ and distinct from $x_{2}$.
We apply the same arguments to $x_{2},x_{3},x_{4}$ and get a vertex $x_{5}$.
Step by step, we construct a sequence $\{x_{i}\}_{i\in {\mathbb N}}$ satisfying the conditions (Z1) -- (Z3).
Therefore, every zigzag $\{x_{i}\}_{i\in {\mathbb N}}$ is completely determined by any three consecutive vertices $x_{i},x_{i+1},x_{i+2}$.

If $X=\{x_{1},\dots, x_{n}\}$ is a sequence of vertices in $\Gamma$, 
then $X^{-1}$ denotes the reversed sequence $x_{n},\dots,x_{1}$.
If $Z$ is a zigzag, then the same holds for $Z^{-1}$.
If $Z$ contains a sequence $x,y, z$, then the sequence $z,y,x$ is contained the reversed zigzag $Z^{-1}$.
A sequence of type $x,y,z,\dots,z,y,x,\dots$ cannot be a zigzag 
(see, for example, \cite{DP}), i.e. a zigzag is not reversed to itself.
Every zigzag will be identified with its reverse.

\begin{exmp}{\rm
See Fig.1 for examples of zigzags in  the complete graph $K_{4}$ and the cube graph $Q_3$.
\begin{center}
\begin{tikzpicture}[scale=0.8]

\draw[fill=black] (0,0.5) circle (3pt);
\draw[fill=black] (3,0.5) circle (3pt);
\draw[fill=black] (1.5,1.5) circle (3pt);
\draw[fill=black] (1.5,3.5) circle (3pt);

\draw[thick, dashed] (0,0.5) -- (1.5,1.5);
\draw[thick, dashed] (3,0.5) -- (1.5,3.5);
\draw[thick,line width=1.25pt] (0,0.5) -- (3,0.5) -- (1.5,1.5) -- (1.5,3.5) -- (0,0.5);

\draw[fill=black] (5.5,0.5) circle (3pt);
\draw[fill=black] (5.5,3.5) circle (3pt);
\draw[fill=black] (8.5,0.5) circle (3pt);
\draw[fill=black] (8.5,3.5) circle (3pt);

\draw[fill=black] (6.25,1.25) circle (3pt);
\draw[fill=black] (6.25,2.75) circle (3pt);
\draw[fill=black] (7.75,1.25) circle (3pt);
\draw[fill=black] (7.75,2.75) circle (3pt);

\draw[thick, dashed] (6.25,1.25) -- (6.25,2.75) -- (7.75,2.75);
\draw[thick, dashed] (6.25,2.75) -- (5.5,3.5);
\draw[thick, dashed] (7.75,1.25) -- (8.5,0.5);
\draw[thick, dashed] (5.5,0.5) -- (8.5,0.5) -- (8.5,3.5);
\draw[thick,line width=1.25pt] (5.5,0.5) -- (6.25,1.25) -- (7.75,1.25) -- (7.75,2.75) -- (8.5,3.5) -- (5.5,3.5) -- (5.5,0.5);

\end{tikzpicture}
\captionof{figure}{ }
\end{center}
}\end{exmp}

For a zigzag $\{x_{i}\}_{i\in {\mathbb N}}$ in $\Gamma$ the sequence $\{F_{i}\}_{i\in {\mathbb N}}$, where $F_i$ is the face containing 
$x_{i},x_{i+1},x_{i+2}$, is a zigzag in the dual graph $\Gamma^{*}$.
There is a one-to-one correspondence between zigzags in $\Gamma$ and $\Gamma^{*}$.

\section{$z$-knotted graphs}
The graph $\Gamma$ is called $z$-{\it knotted} if it contains the unique zigzag (up to reversing).

Let $e$ be an edge in $\Gamma$ and let $x,y$ be the vertices on this edge.
We take one of the two faces containing $e$. 
This face contains a unique vertex $p\ne y$ adjacent to $x$ and the unique vertex $q\ne x$ adjacent to $y$
(if $p=q$, then the face is a triangle).
The sequences $p,x,y$ and $x,y,q$ define some zigzags (not necessarily distinct)
and the reversed zigzags correspond to the sequences $y,x,p$ and $q,y,x$, respectively. 
See Fig.2.
If $\Gamma$ is $z$-knotted, then the zigzags determined by $p, x, y$ and $x, y, q$ must coincide (up to reversal), 
and so the unique zigzag contains, say, $p, x, y$ and either $x, y, q$ or $q, y, x$, i.e.
one of the following possibilities is realized:
\begin{enumerate}
\item[(1)] the unique zigzag is a sequence $\dots,x,y,\dots, y,x,\dots$,
\item[(2)] the unique zigzag is a sequence $\dots,x,y\dots,x,y,\dots$.
\end{enumerate}
The edge $e$ is said to be of {\it first} or of {\it second type} if the corresponding case is realized.
So, the zigzag in a $z$-knotted graph passes through every edge twice in different directions (first type) or in the same direction (second type).
Conversely, if there is a zigzag of $\Gamma$ passing through every edge twice, then $\Gamma$ is $z$-knotted.

\begin{exmp}{\rm
Consider the graph of the $n$-gonal bipyramid $BP_{n}$, $n\ge 3$ 
which consists of an $n$-gone whose vertices are denoted by $1,\dots,n$ and connected with two disjoint vertices $a,b$.
Suppose that $n=2k$. 
If $k$ is odd, then the sequence
$$a, 1, 2, b,3,4,\dots, a, n-1, n, $$$$b,1, 2, a, 3, 4,\dots, b, n-1, n$$
is a zigzag. 
In the case when $k$ is even, one of the zigzags is $a,1,2,b,3,4$ if $k=2$ and 
$$a, 1, 2, b, 3, 4, a,\dots, b, n-1, n$$
if $k\ge 4$.
Each of these sequences passes through the edge joining $a$ and $1$ only ones.
This means that $BP_{n}$ is not $z$-knotted if $n$ is even.
Consider the case when $n$ is odd, i.e. $n=2k+1$ and $k\ge 1$.
If $k=1$, then
$$a,1,2,b,3,1,a,2,3,b,1,2,a,3,1,b,2,3$$
is a zigzag.
Similarly, if $k$ is odd and not less than $3$, then 
$$a, 1, 2, b, 3,4,\dots, a, n-2, n-1, b, n, 1, 
a, 2, 3, b, 4,5,\dots,a, n-1, n,$$
$$b, 1, 2, a, 3,4,\dots, b, n-2, n-1, a, n, 1,
b, 2, 3, a, 4,5,\dots,b, n-1, n$$
is a zigzag.
In the case when $k=2$, the sequence 
$$a,1,2,b,3,4,a,5,1,b,2,3,a,4,5,$$
$$b,1,2,a,3,4,b,5,1,a,2,3,b,4,5$$
is a zigzag.
If $k$ is even and not less than $4$, then 
$$a, 1, 2, b, 3, 4, a, \dots, b, n-2, n-1, 
a, n, 1, b, 2, 3, a, \dots, a, n-1, n, $$
$$b, 1, 2, a, 3, 4, b, \dots, a, n-2, n-1,
b, n, 1,a, 2, 3, b, \dots, b, n-1, n$$
is a zigzag.
The length of these zigzags is $6n$. Since $BP_{n}$ contains precisely $3n$ edges, 
each of these zigzags passes through all edges twice.
This guarantees that there are no other zigzags (except the reverse) and $BP_{n}$ is $z$-knotted if $n$ is odd.
Every edge joining $x\in\{a,b\}$ and $y\in\{1,\dots,n\}$ is of first type and all edges in the $n$-gon are of second type.
Each face of $BP_{n}$ is a triangle formed by two edges of first type and one edge of second type.
The zigzag passes through each face thrice.
Suppose that $x\in\{a,b\}$ and $y,z\in \{1,\dots,n\}$ are the vertices on a certain face.
An immediate verification shows that the zigzag is a sequence 
$$\dots,x, y, z, \dots, y, x, z, \dots, y, z, x,\dots$$
if $k$ is odd and it is a sequence 
$$\dots,x, y, z, \dots, y, z, x, \dots, y, x, z,\dots$$
if $k$ is even.
}\end{exmp}

Suppose that $\Gamma$ is $z$-knotted. This is equivalent to the dual graph $\Gamma^{*}$ being $z$-knotted.
If $e$ is an edge of first type in $\Gamma$, then the zigzag is a sequence 
$$\dots,p,x,y,p'\dots, q,y,x,q'\dots,$$
see Fig.2.
Denote by $F$ and $F'$ the faces containing $p,x,y,q$ and $p',x,y,q'$, respec\-tively. 
The edge $e$ corresponds to the edge of $\Gamma^{*}$ joining  $F$ and $F'$.
The zigzag in $\Gamma^{*}$ is a sequence $\dots,F,F',\dots,F,F',\dots$, i.e. 
the edge of $\Gamma^{*}$ corresponding to $e$ is of second type.
\begin{center}
\begin{tikzpicture}[scale=0.8]

\draw[fill=black] (1,0) circle (3pt);
\draw[fill=black] (3,0) circle (3pt);
\draw[fill=black] (0.25,1.5) circle (3pt);
\draw[fill=black] (3.75,1.5) circle (3pt);
\draw[fill=black] (0.25,-1.5) circle (3pt);
\draw[fill=black] (3.75,-1.5) circle (3pt);

\draw[thick] (0.25,1.5) -- (1,0)-- (3,0)-- (3.75,1.5);
\draw[thick] (0.25,-1.5) -- (1,0)-- (3,0)-- (3.75,-1.5);

\node at (-0.05,1.65) {$p$};
\node at (4.05, 1.65) {$q$};
\node at (-0.05,-1.65) {$q'$};
\node at (4.05, -1.65) {$p'$};
\node at (1.15,-0.25) {$x$};
\node at (2.85,-0.25) {$y$};
\node at (2,0.15) {$e$};
\node at (2,1) {$F$};
\node at (2,-1) {$F'$};

\end{tikzpicture}
\captionof{figure}{ }
\end{center}
Similarly, every edge of second type in $\Gamma$ corresponds to an edge of first type in $\Gamma^{*}$.

\section{Two types of faces in $z$-knotted triangulations}
In this section, we suppose that $\Gamma$ is $z$-knotted and every face of $\Gamma$ is a triangle, 
i.e. $\Gamma$ is a $z$-knotted triangulation.
\begin{lemma}\label{lemma1}
For every face $F$ one of the following possibilities is realized:
\begin{enumerate}
\item[(1)] only one edge in $F$ is of second type,
\item[(2)] all edges in $F$ are of second type.
\end{enumerate}
\end{lemma}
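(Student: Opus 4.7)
The plan is to analyze how the unique zigzag passes through a fixed triangular face $F$ with vertices $A,B,C$. First I would observe that any three consecutive vertices $x_i,x_{i+1},x_{i+2}$ of the zigzag lie on a common face (by (Z2)), so a single ``passage'' of the zigzag through $F$ uses exactly two edges of $F$ sharing a common middle vertex $x_{i+1}$. Since $\Gamma$ is $z$-knotted, the zigzag traverses each of the three edges of $F$ exactly twice, giving $6$ edge-traversals inside $F$; dividing by the $2$ edges per passage, the zigzag passes through $F$ exactly three times. A quick pigeonhole check (if two passages shared the same middle vertex, one edge of $F$ would be used at most once while another would be used at least three times) forces the three middle vertices to be precisely $A$, $B$, $C$.

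Next I would orient the boundary of $F$ cyclically, say $A\to B\to C\to A$, and assign to each of the three passages a sign $\sigma_X\in\{+,-\}$ for the passage whose middle is $X$: $+$ if this passage traverses the two incident edges of $F$ in the cyclic direction, $-$ otherwise. Concretely, $\sigma_B=+$ means the passage reads $A,B,C$, and $\sigma_B=-$ means it reads $C,B,A$; similarly for $\sigma_A$ and $\sigma_C$. The edge $AB$ is shared by exactly the two passages with middles $A$ and $B$, and by direct inspection it is traversed in the same direction by both iff $\sigma_A=\sigma_B$; by the definition of edge type (from the previous section) this is precisely the condition that $AB$ is of second type. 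The analogous equivalences hold for the edges $BC$ and $AC$:
\begin{equation*}
AB\text{ is of second type}\iff \sigma_A=\sigma_B,\qquad
BC\iff \sigma_B=\sigma_C,\qquad
AC\iff \sigma_A=\sigma_C.
\end{equation*}

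The lemma is now a trivial case analysis on the triple $(\sigma_A,\sigma_B,\sigma_C)\in\{+,-\}^3$. If all three signs coincide, every pair agrees, so all three edges of $F$ are of second type, which is case (2). Otherwise exactly one sign differs from the other two, so exactly one of the three pairwise equalities holds, giving exactly one edge of second type, which is case (1). No configuration yields zero or two second-type edges, completing the proof.

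The only delicate point is the direction bookkeeping when equating ``$\sigma_A=\sigma_B$'' with ``$AB$ is of second type,'' so I would include the figure-level verification for one edge and invoke symmetry for the other two. Everything else is counting.
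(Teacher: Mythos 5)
Your proof is correct and follows essentially the same route as the paper: both hinge on the observation that the unique zigzag passes through $F$ exactly three times, each passage consisting of two edges of $F$ meeting at a middle vertex, so that the three middles are exactly the three vertices of $F$. Your sign bookkeeping $\sigma_A,\sigma_B,\sigma_C$ and the parity count (three agreements or exactly one) is simply a cleaner formalization of the case inspection that the paper carries out by examining the arrow configurations in its Figure~3.
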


In the cases (1) and (2), we say that $F$ is a $(1,1,2)$-face or a $(2,2,2)$-face, respectively.

\begin{proof}[Proof of Lemma \ref{lemma1}]
Observe that the zigzag passes through the face $F$ thrice;
i.e. there are precisely three intersections of the zigzag with $F$ which contain more than one vertex.
Every such intersection consists of two edges. 
\begin{center}
\begin{tikzpicture}[scale=0.8]

\draw[fill=black] (0,2) circle (3pt);
\draw[fill=black] (-1.7320508076,-1) circle (3pt);
\draw[fill=black] (1.7320508076,-1) circle (3pt);

\draw [thick, decoration={markings,
mark=at position 0.49 with {\arrow[scale=2,>=stealth]{<}},
mark=at position 0.61 with {\arrow[scale=2,>=stealth]{<}}},
postaction={decorate}] (0,2) -- (-1.7320508076,-1);

\draw [thick, decoration={markings,
mark=at position 0.49 with {\arrow[scale=2,>=stealth]{>}},
mark=at position 0.61 with {\arrow[scale=2,>=stealth]{<}}},
postaction={decorate}] (-1.7320508076,-1) -- (1.7320508076,-1);

\draw [thick, decoration={markings,
mark=at position 0.49 with {\arrow[scale=2,>=stealth]{<}},
mark=at position 0.61 with {\arrow[scale=2,>=stealth]{<}}},
postaction={decorate}] (0,2) -- (1.7320508076,-1);

\node at (0,-1.6) {(a)};

\draw[fill=black] (5.1961524228,2) circle (3pt);
\draw[fill=black] (3.4641016152,-1) circle (3pt);
\draw[fill=black] (6.9282032304,-1) circle (3pt);

\draw [thick, decoration={markings,
mark=at position 0.49 with {\arrow[scale=2,>=stealth]{<}},
mark=at position 0.61 with {\arrow[scale=2,>=stealth]{<}}},
postaction={decorate}] (5.1961524228,2) -- (3.4641016152,-1);

\draw [thick, decoration={markings,
mark=at position 0.49 with {\arrow[scale=2,>=stealth]{>}},
mark=at position 0.61 with {\arrow[scale=2,>=stealth]{<}}},
postaction={decorate}] (3.4641016152,-1) -- (6.9282032304,-1);

\draw [thick, decoration={markings,
mark=at position 0.49 with {\arrow[scale=2,>=stealth]{>}},
mark=at position 0.61 with {\arrow[scale=2,>=stealth]{>}}},
postaction={decorate}] (5.1961524228,2) -- (6.9282032304,-1);

\node at (5.1961524228,-1.6) {(b)};

\draw[fill=black] (10.3923048456,2) circle (3pt);
\draw[fill=black] (8.660254038,-1) circle (3pt);
\draw[fill=black] (12.1243556532,-1) circle (3pt);

\draw [thick, decoration={markings,
mark=at position 0.49 with {\arrow[scale=2,>=stealth]{>}},
mark=at position 0.61 with {\arrow[scale=2,>=stealth]{<}}},
postaction={decorate}] (10.3923048456,2) -- (8.660254038,-1);

\draw [thick, decoration={markings,
mark=at position 0.49 with {\arrow[scale=2,>=stealth]{>}},
mark=at position 0.61 with {\arrow[scale=2,>=stealth]{<}}},
postaction={decorate}] (8.660254038,-1) -- (12.1243556532,-1);

\draw [thick, decoration={markings,
mark=at position 0.49 with {\arrow[scale=2,>=stealth]{>}},
mark=at position 0.61 with {\arrow[scale=2,>=stealth]{<}}},
postaction={decorate}] (10.3923048456,2) -- (12.1243556532,-1);

\node at (10.3923048456,-1.6) {(c)};

\end{tikzpicture}
\captionof{figure}{ }
\end{center}
This observation shows that the following possibilities cannot be realized:
$F$ contains precisely two edges of second type (see Fig.3 (a) and (b)) 
and all edges in $F$ are of first type (see Fig.3 (c)).
\end{proof}

Suppose that $F$ is a $(1,1,2)$-face. Let $x,y,z$ be the vertices of $F$. 
We assume that the vertices $y,z$ are on a unique edge of second type 
and the zigzag goes twice from $y$ to $z$, see Fig.4.

\begin{center}
\begin{tikzpicture}[scale=0.8]

\draw[fill=black] (0,2) circle (3pt);
\draw[fill=black] (-1.7320508076,-1) circle (3pt);
\draw[fill=black] (1.7320508076,-1) circle (3pt);

\draw [thick, decoration={markings,
mark=at position 0.49 with {\arrow[scale=2,>=stealth]{>}},
mark=at position 0.61 with {\arrow[scale=2,>=stealth]{<}}},
postaction={decorate}] (0,2) -- (-1.7320508076,-1);

\draw [thick, decoration={markings,
mark=at position 0.49 with {\arrow[scale=2,>=stealth]{>}},
mark=at position 0.61 with {\arrow[scale=2,>=stealth]{>}}},
postaction={decorate}] (-1.7320508076,-1) -- (1.7320508076,-1);

\draw [thick, decoration={markings,
mark=at position 0.49 with {\arrow[scale=2,>=stealth]{>}},
mark=at position 0.61 with {\arrow[scale=2,>=stealth]{<}}},
postaction={decorate}] (0,2) -- (1.7320508076,-1);

\node at (-2,-1) {$y$};
\node at (2,-1) {$z$};
\node at (0.3,2) {$x$};

\end{tikzpicture}
\captionof{figure}{ }
\end{center}
Then the zigzag contains the sequences $x,y,z$ and $y,z,x$.
The zigzag passes through $F$ thrice and the third sequence formed by the vertices of $F$ is $y,x,z$.
Since the zigzag is a cyclic sequence, we can suppose $x,y,z$ are the first three consecutive vertices in the zigzag.
The remaining two sequences associated to $F$ are $y,z,x$ and $y,x,z$.
There are precisely the following two possibilities for the zigzag:
\begin{enumerate}
\item[(od)] $x,y,z,\dots,y,x,z,\dots,y,z,x,\dots$,
\item[(ev)] $x,y,z,\dots,y,z,x,\dots,y,x,z,\dots$.
\end{enumerate}
The first case is realized for all faces in the bipyramid graph $BP_{2k+1}$ if $k$ is odd. In this case, we say that $F$ is a $(1,1,2)$-face of {\it odd type}.
The second case corresponds to all faces in $BP_{2k+1}$ if $k$ is even,
and $F$ is said to be a $(1,1,2)$-face of {\it even type} in this case.

Let $F$ be a $(2,2,2)$-face and let $x,y,z$ be the vertices on $F$.
Formally, we have  two possibilities (Fig.5); but the second cannot happen because the zigzag passes through each face thrice.
So, the zigzag defines the orientation on $F$.
We suppose that this orientation coincides with the order of vertices in the sequence $x,y,z$ (the first triangle on Fig.5).
\begin{center}
\begin{tikzpicture}[scale=0.8]

\draw[fill=black] (0,2) circle (3pt);
\draw[fill=black] (-1.7320508076,-1) circle (3pt);
\draw[fill=black] (1.7320508076,-1) circle (3pt);

\draw [thick, decoration={markings,
mark=at position 0.49 with {\arrow[scale=2,>=stealth]{>}},
mark=at position 0.61 with {\arrow[scale=2,>=stealth]{>}}},
postaction={decorate}] (0,2) -- (-1.7320508076,-1);

\draw [thick, decoration={markings,
mark=at position 0.49 with {\arrow[scale=2,>=stealth]{>}},
mark=at position 0.61 with {\arrow[scale=2,>=stealth]{>}}},
postaction={decorate}] (-1.7320508076,-1) -- (1.7320508076,-1);

\draw [thick, decoration={markings,
mark=at position 0.49 with {\arrow[scale=2,>=stealth]{<}},
mark=at position 0.61 with {\arrow[scale=2,>=stealth]{<}}},
postaction={decorate}] (0,2) -- (1.7320508076,-1);

\node at (-2,-1) {$y$};
\node at (2,-1) {$z$};
\node at (0.3,2) {$x$};

\draw[fill=black] (5,2) circle (3pt);
\draw[fill=black] (3.2679491924,-1) circle (3pt);
\draw[fill=black] (6.7320508076,-1) circle (3pt);

\draw [thick, decoration={markings,
mark=at position 0.49 with {\arrow[scale=2,>=stealth]{>}},
mark=at position 0.61 with {\arrow[scale=2,>=stealth]{>}}},
postaction={decorate}] (5,2) -- (3.2679491924,-1);

\draw [thick, decoration={markings,
mark=at position 0.49 with {\arrow[scale=2,>=stealth]{>}},
mark=at position 0.61 with {\arrow[scale=2,>=stealth]{>}}},
postaction={decorate}] (3.2679491924,-1) -- (6.7320508076,-1);

\draw [thick, decoration={markings,
mark=at position 0.49 with {\arrow[scale=2,>=stealth]{>}},
mark=at position 0.61 with {\arrow[scale=2,>=stealth]{>}}},
postaction={decorate}] (5,2) -- (6.7320508076,-1);

\end{tikzpicture}
\captionof{figure}{ }
\end{center}
The zigzag contains the sequences $x,y,z$ and $y,z,x$ and $z,x,y$.
Since the zigzag is a cyclic sequence, we can suppose that the first three consecutive vertices are $x,y,z$. 
Then there are only the following two possibilities for the zigzag:
\begin{enumerate}
\item[(1)] $x, y, z, \dots, z, x, y, \dots, y, z, x,\dots$,
\item[(2)] $x, y, z, \dots, y, z, x, \dots, z, x, y,\dots$.
\end{enumerate}
We say that $F$ is a $(2,2,2)$-face of {\it first} or of {\it second type} if the corresponding case is realized.
See Examples 4 and 5 for $z$-knotted triangulations containing $(2,2,2)$-faces of first type.
In Section 7, we construct a $z$-knotted triangulation with a $(2,2,2)$-face of second type.

\section{Main result}
Let $\Gamma$ and $\Gamma'$ be triangulations of closed connected $2$-dimensional  surfaces $M$ and $M'$, respectively.
Suppose that $F$ is a face in $\Gamma$ and $F'$ is a face in $\Gamma'$.
These faces both are homeomorphic to a closed $2$-dimensional disc 
and each of the boundaries $\partial F$ and $\partial F'$ is the sum of three edges. 
Let $g: \partial F \to \partial F'$ be a homeomorphism transferring every vertex of $F$ to a vertex of $F'$, i.e.
if $x_{i}$, $i\in \{1,2,3\}$ are the vertices of $F$, then $x'_{i}=g(x_{i})$, $i\in \{1,2,3\}$ are the vertices of $F'$.
The {\it connected sum} $\Gamma \#_{g} \Gamma'$ is the graph whose vertex set is the union of the vertex sets of $\Gamma$ and $\Gamma'$,
where every $x_i$ is identified with $x'_i$, and the edge set is the union of the edge sets of $\Gamma$ and $\Gamma'$, 
where the edge containing $x_i,x_j$ is identified with the edge containing $x'_i,x'_j$.
This is a triangulation in the connected sum of $M$ and $M'$.

\begin{exmp}{\rm
Any connected sum of two exemplars of $K_{4}$ is the bipyramid graph $BP_{3}$.
One of the connected sums of two exemplars of $BP_{3}$ is presented on Fig.6.
\begin{center}
\begin{tikzpicture}[scale=0.8]

\draw[fill=black] (0,0) circle (3pt);
\draw[fill=black] (2,0) circle (3pt);
\draw[fill=black] (1,1) circle (3pt);
\draw[fill=black] (3,1) circle (3pt);
\draw[fill=black] (1.5,-1.5) circle (3pt);
\draw[fill=black] (0.75,2.5) circle (3pt);
\draw[fill=black] (2.25,2.5) circle (3pt);

\draw[thick] (0,0) -- (2,0)-- (3,1)-- (1,1)-- (0,0)-- (0.75,2.5);
\draw[thick] (2,0) -- (1,1) -- (0.75,2.5) -- (2,0) -- (2.25,2.5)-- (3,1);
\draw[thick] (2.25,2.5)-- (1,1);
\draw[thick] (0,0)-- (1.5,-1.5) -- (2,0);
\draw[thick] (1,1)-- (1.5,-1.5) -- (3,1);

\end{tikzpicture}
\captionof{figure}{ }
\end{center}
Later, we show that it is a $z$-knotted triangulation containing $(2,2,2)$-faces of first type.
}\end{exmp}

\begin{theorem}
Suppose that $\Gamma$ and $\Gamma'$ are $z$-knotted.
Then the following assertions are fulfilled: 
\begin{enumerate}
\item[{\rm(1)}] 
If $F$ and $F'$ are $(1,1,2)$-faces in $\Gamma$ and $\Gamma'$ {\rm(}respectively{\rm)}, 
then there is a homeomorphism $g: \partial F \to \partial F'$ transferring vertices to vertices and such that 
the connected sum $\Gamma \#_{g} \Gamma'$ is $z$-knotted.
\item[{\rm (2)}] 
If $F$ is a $(2,2,2)$-face of first type in $\Gamma$, 
then for every face $F'$ in $\Gamma'$ and every homeomorphism $g: \partial F \to \partial F'$ transferring vertices to vertices 
the connected sum $\Gamma \#_{g} \Gamma'$ is $z$-knotted.
\item[{\rm (3)}]
Suppose that $F$ is a $(2,2,2)$-face of second type in $\Gamma$ and $F'$ is a face in $\Gamma'$ such that
the connected sum $\Gamma \#_{g} \Gamma'$ is $z$-knotted for a certain homeomorphism $g: \partial F \to \partial F'$ transferring vertices to vertices.
Then $F'$ is a $(2,2,2)$-face of first type or a $(1,1,2)$-face of odd type. 
In these cases, $\Gamma \#_{g} \Gamma'$ is $z$-knotted for every homeomorphism $g: \partial F \to \partial F'$ transferring vertices to vertices.
\end{enumerate}
\end{theorem}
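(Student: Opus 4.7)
My plan is to reduce the $z$-knottedness of $\Gamma\#_g\Gamma'$ to the cycle structure of an explicit permutation on six elements---the \emph{flags} of $\partial F$, i.e., pairs $(v,e)$ consisting of a vertex $v$ of $F$ and an incident edge $e$ of $F$---and then to carry out the resulting finite check by hand.

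First I would use that $\Gamma$ is $z$-knotted and that its zigzag passes through $F$ three times: this cuts $\Gamma\setminus F$ into three arcs, each with two naturally labelled flag-endpoints (the first vertex of the arc together with the last edge of the preceding $F$-pass at the start, and analogously at the end). The three arcs then define a perfect matching $P_F$ on the six flags; analogously one has $P_{F'}$, and $g$ transports it to a matching on flags of $\partial F$. A short local trace will show that when a $\Gamma$-arc ends at $(v,e)$, the connected-sum zigzag crosses $e$ to $v'$ (the opposite endpoint of $e$) and continues as the $\Gamma'$-arc starting at $(v',e)$; writing $\iota$ for the involution $(v,e)\mapsto(v',e)$, the entire zigzag of $\Gamma\#_g\Gamma'$ is thus governed by
\[
  \phi_g \;=\; \iota \circ g^{-1}(P_{F'}) \circ \iota \circ P_F,
\]
and $\Gamma\#_g\Gamma'$ is $z$-knotted iff $\phi_g$ has exactly two orbits, each of size three (the second being the $P_F$-image of the first, which represents the reverse zigzag).

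Using the explicit zigzag patterns of Section~4, I will tabulate $P_F$ for each face type: a $(2,2,2)$-face of first type yields the \emph{vertex pairing} $V$ (the two flags at each common vertex are paired); a $(2,2,2)$-face of second type yields $\iota$ itself (the two flags at each common edge are paired); a $(1,1,2)$-face of odd type yields $O_v$, which has one same-vertex pair at the vertex $v$ opposite the second-type edge plus two cross pairs; and a $(1,1,2)$-face of even type yields $X_e$, which has one same-edge pair at the second-type edge $e$ plus two cross pairs. Since $V$ and $\iota$ are invariant under every symmetry of the triangle, $\phi_g$ is independent of $g$ whenever one of the two faces is of $(2,2,2)$-type.

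The remaining step is a short cycle-type calculation, grounded in the identities that $\iota V$ and each $\iota O_v$ factor as two disjoint $3$-cycles, while $\iota^2=\mathrm{id}$ and each $\iota X_e$ has cycle type $2{+}2{+}1{+}1$. Claim (2) will then follow because $P_F=V$ forces $\phi_g$, for each of the four possibilities for $P_{F'}$, to be a product of two $3$-cycles. Claim (3) is immediate once I observe that $P_F=\iota$ collapses the formula to $\phi_g=\iota\cdot g^{-1}(P_{F'})$, whose cycle type is $3{+}3$ precisely for $P_{F'}\in\{V,O_v\}$. For claim (1), I will construct in each sub-case an explicit $g$---aligning the distinguished vertices in the $O$-$O$ case, misaligning the distinguished edges in the $X$-$X$ case, and placing the distinguished vertex opposite the distinguished edge in the $O$-$X$ case---and verify that $\phi_g$ has the required $3{+}3$ structure. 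The hardest part will be the careful local bookkeeping needed to establish the formula for $\phi_g$; once it is in place, the remaining verification is a finite enumeration over six matchings and six gluings.
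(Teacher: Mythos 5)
Your proposal is correct, and its starting point is the same as the paper's: both proofs cut the zigzags of $\Gamma$ and $\Gamma'$ at their three passes through $F$ and $F'$ into segments $A,B,C$ and $A',B',C'$, observe that every zigzag of $\Gamma\#_g\Gamma'$ is an alternating concatenation of these segments governed by a local continuation rule, and reduce $z$-knottedness to the six directed segments of $\Gamma$ forming exactly two reversal-paired cycles of length three. Where you genuinely depart from the paper is in how the finite check is carried out. The paper enumerates: for each ordered pair of face types and each of the six identifications it traces the concatenation by hand and records the resulting zigzags in Tables 1--10. You instead package the three arcs as a perfect matching $P_F$ on the six flags of $\partial F$ and reduce everything to the cycle type of $\phi_g=\iota\circ g^{-1}(P_{F'})\circ\iota\circ P_F$. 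I verified your dictionary ($V$, $\iota$, $O_v$, $X_e$ for the four face types), the cycle-type identities, and the three explicit choices of $g$ in part (1); all are correct and reproduce the paper's tables. Your route buys two real things: the fact that $\iota$ and $V$ commute with every symmetry of the triangle makes the $g$-independence in parts (2) and (3) conceptually transparent rather than an artifact of inspecting six table rows, and the collapse $\phi_g=\iota\circ g^{-1}(P_{F'})$ when $P_F=\iota$ settles part (3) in one line. Two small caveats. First, when the $(2,2,2)$-face is $F$ rather than $F'$, $\phi_g$ itself is \emph{not} independent of $g$ --- only its conjugacy class is, since $g^{-1}(P_{F'})$ ranges over a conjugacy orbit under maps commuting with $\iota$ and $V$ --- which is all you need, but should be stated that way. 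Second, your cycle-type certificate does not exhibit the zigzag itself, whereas the paper's explicit tables are reused afterwards (Examples 4 and 5, Section 7) to determine the types of faces inside the connected sum; so for the paper's later constructions the tables carry information your argument would have to recover separately.
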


Using this result we construct the following examples of $z$-knotted triangulations containing $(2,2,2)$-faces of first type:
\begin{enumerate}
\item[$\bullet$] If $F$ is a $(1,1,2)$-face of odd type in a $z$-knotted triangulation $\Gamma$ 
and $F'$ is a face in the bipyramid graph $BP_{2k+1}$, where $k$ is odd, 
then for some homeomorphisms $g: \partial F \to \partial F'$ transferring vertices to vertices 
the connected sum $\Gamma \#_{g} BP_{2k+1}$ is $z$-knotted and contains $(2,2,2)$-faces of first type
(Example 4).
\item[$\bullet$]
If $F$ is a $(1,1,2)$-face of even type in a $z$-knotted triangulation $\Gamma$ 
and $F'$ is a face in the bipyramid graph $BP_{2k+1}$, where $k$ is even, 
then for some homeomorphisms $g: \partial F \to \partial F'$ transferring vertices to vertices 
the connected sum $\Gamma \#_{g} BP_{2k+1}$ is $z$-knotted and contains $(2,2,2)$-faces of first type
(Example 5). 
\end{enumerate}

\begin{rem}{\rm
Examples of $z$-knotted fullerenes can be found in \cite[p.31, Fig 2.2]{D-DS}.
The dual graphs are $z$-knotted triangulations whose vertices are of degree $5$ or $6$.
Observe that each vertex of  degree $5$ is adjacent to every vertex of a certain $5$-gone,
but there is no other vertex adjacent to all vertices of this $5$-gone. 
This means that 
these triangulations cannot be presented as the connected sums of some collections of $z$-knotted bipyramids.
}\end{rem}

\begin{rem}{\rm
If $E$ is the edge set of a $z$-knotted embedded graph,
then the cyclic ordering of $E$ (associated to the zigzag) induces 
two linear transformations of $2^{E}$ (considered as a vector space over ${\mathbb F}_{2}$) \cite{LinsSilva}.
It will be interesting to describe relations between such transformations for $\Gamma, \Gamma'$ and 
$\Gamma \#_{g}\Gamma'$ (if this connected sum is $z$-knot\-ted).
}\end{rem}

\section{Proof of Theorem 1}

\subsection{General construction of zigzags in the connected sum}
From this moment, we will suppose that $\Gamma$ and $\Gamma'$ are $z$-knotted triangulations.
Let $F$ and $F'$ be faces in $\Gamma$ and $\Gamma'$, respectively.
If $x,y,z$ are the vertices of $F$, then the zigzag of $\Gamma$ is a sequence 
$$x,y,z,\dots,\delta(x),\delta(y),\delta(z),\dots,\gamma(x),\gamma(y),\gamma(z),\dots,$$
where $\delta$ and $\gamma$ are permutations on the set $\{x,y,z\}$.
Let us consider this zigzag as the union of the following three segments 
$$A=\{y, z, \dots, \delta(x),\delta(y)\},$$
$$B=\{\delta(y),\delta(z), \dots, \gamma(x),\gamma(y)\},$$
$$C=\{\gamma(y),\gamma(z) \dots, x,y\},$$
i.e. as the cyclic sequence $A,B,C$.  
It must be pointed out that  any two consecutive segments have the same vertex
(for example, the segments $A$ and $B$ are joined in the vertex $\delta(y)$).
Similarly, if $x',y',z'$ are the vertices of $F'$, then the zigzag of $\Gamma'$ is a sequence 
$$x',y',z',\dots,\delta'(x'),\delta'(y'),\delta'(z'),\dots,\gamma'(x'),\gamma'(y'),\gamma'(z'),\dots,$$
where $\delta'$ and $\gamma'$ are permutations on the set $\{x',y',z'\}$.
We consider this zigzag as the cyclic sequence $A',B',C'$, where
$$A'=\{y', z', \dots, \delta'(x'),\delta'(y')\},$$
$$B'=\{\delta'(y'),\delta'(z'), \dots, \gamma'(x'),\gamma'(y')\},$$
$$C'=\{\gamma'(y'),\gamma'(z') \dots, x',y'\}.$$
Let $g:\partial F\to \partial F'$ be a homeomorphism transferring vertices to vertices. 
Zigzags in the connected sum $\Gamma\#_{g}\Gamma'$ can be constructed as follows.
We start from a segment $X\in \{A,B,C\}$.
Let $a,b\in \{x,y,z\}$ be the last two vertices in $X$ (the order of the vertices is important).
There is a unique segment 
$$X'\in \{A',B',C',A'^{-1},B'^{-1},C'^{-1}\}$$
such that $g(a),g(b)\in \{x',y',z'\}$ are the first two vertices in $X'$ 
(as above, the order of the vertices is important).
Indeed, if $c'$ is the element of $\{x',y',z'\}$ distinct from $g(a),g(b)$ and
there are two segments satisfying this condition,
then the zigzag in $\Gamma'$ contains the sequence $g(a),g(b),c'$ twice or 
it contains this sequence together with the reversed sequence $c',g(b),g(a)$; each of these cases is impossible.

Let $a',b'\in \{x',y',z'\}$ be the last two vertices in $X'$.
There is a unique segment 
$$Y\in \{A,B,C,A^{-1},B^{-1},C^{-1}\}\setminus\{X, X^{-1}\}$$
such that $g^{-1}(a'),g^{-1}(b')$ are the first two vertices in $Y$.
If $Y=X$, then $X,X'$ is a zigzag in $\Gamma \#_{g} \Gamma'$.
In the case when $X$ and $Y$ are distinct, we apply the same arguments to $Y$ and, as above, choose a certain segment
$$Y'\in \{A',B',C',A'^{-1},B'^{-1},C'^{-1}\}\setminus \{X',X'^{-1}\}.$$  
Step by step, we construct a cyclic sequence which defines a zigzag in $\Gamma \#_{g} \Gamma'$.
The intersection of any two consecutive segments in this sequence  consists of precisely two vertices.
There are the following three possibilities for the associated zigzag:
$$X,X'\;\mbox{ or }\; X,X',Y,Y'\;\mbox{ or }\; X,X',Y,Y',Z,Z',$$
where $X,Y,Z$  and $X',Y',Z'$ are distinct elements of 
$$\{A,B,C,A^{-1},B^{-1},C^{-1}\}\;\mbox{ and }\;\{A',B',C',A'^{-1},B'^{-1},C'^{-1}\},$$
respectively.
In the third case, the connected sum $\Gamma \#_{g} \Gamma'$ is $z$-knotted.
Otherwise, $\Gamma \#_{g} \Gamma'$ contains three zigzags of type $X,X'$ 
or one zigzag of type $X,X'$ and one zigzag of type $Y,Y',Z,Z'$.

\subsection{Proof of the statement {\rm (1)}}
In this subsection, we suppose that $F$ and $F'$ both are $(1,1,2)$-faces, 
i.e. similar to faces in the $z$-knotted bipyramids. 
As in Example 2, 
we denote by $a$ and $a'$ the vertices of $F$ and $F'$ (respectively) which do not belong to the edges of second type.
Also, we write $1,2$ and $1',2'$ for the vertices on the edge of second type in $F$ and $F'$, respectively.
We will assume that the zigzags in $\Gamma$ and $\Gamma'$ go from $1$ and $1'$ to $2$ and $2'$, respectively.

{\it Case 1}
(${\rm od}+{\rm od}$).
If $F$ and $F'$ are of odd type,
then the zigzags in $\Gamma$ and $\Gamma'$ are sequences
$$a,1,2,\dots,1,a,2,\dots,1,2,a,\dots$$
and
$$a',1',2',\dots,1',a',2',\dots,1',2',a',\dots$$
(respectively) and we have  
$$A=\{1, 2, \dots, 1, a\},\;B=\{a, 2, \dots, 1, 2\},\; C=\{2, a, \dots, a, 1\},$$
$$A'=\{1', 2', \dots, 1', a'\},\;B'=\{a', 2', \dots, 1', 2'\},\; C'=\{2', a', \dots, a', 1'\}.$$
There are precisely $6$ distinct bijections between the vertex sets of  $F$ and $F'$. 
In the case when $a$ is identified with $a'$, the corresponding connected sums are $z$-knotted.
If $1,2$ are identified with $1',2'$ (respectively), then the unique zigzag in the connected sum is 
$$A, C'^{-1}, B, A', C^{-1}, B'.$$
If $1$ is identified with $2'$ and $2$ is identified with $1'$, then this is
$$A, C', B, B'^{-1}, C^{-1}, A'^{-1}.$$
In the remaining four cases, the connected sum contains precisely two zigzags and is not $z$-knotted (Tab.1).
\begin{center}
\begin{tabular}{|C|C|C|C|C|}
\hline
\multicolumn{1}{|c|}{Identification} & \multicolumn{2}{c|}{\multirow{2}{*}{Zigzags}} \\ 
\{g(a), g(1), g(2)\}           & \multicolumn{2}{l|}{}                         \\ \hline
\{1', 2', a'\}         & A, B'^{-1} & B, C', C, A' \\ \hline
\{1', a' ,2'\}         & B, B' & A, A'^{-1}, C, C'^{-1} \\ \hline
\{2', a', 1'\}         & B, A'^{-1} & A, B', C, C' \\ \hline
\{2', 1', a'\}         & A, A' & B, C'^{-1}, C, B'^{-1} \\ \hline
\end{tabular}
\captionof{table}{ }
\end{center}

\begin{exmp}{\rm
Suppose that $\Gamma$ is the $n$-bipyramid graph considered in Example 2, $n=2k+1$ and $k$ is odd.
Then
$$A=\{1, 2, b,\dots, 1, a\},\;
B=\{a, 2, \dots, b, 1, 2\},$$
$$C=\{2, a,\dots,1, b, 2, \dots, a, 1\}.$$
If $a, 1, 2$ are identified with $a', 1', 2'$ (respectively),  then the unique zigzag in the connected sum is $A, C'^{-1}, B, A', C^{-1}, B'$. 
The face containing $b, 1, 2$ appears in this zigzag as follows
$$\underbrace{1, 2, b,\dots}_{A}, C'^{-1}, \underbrace{\dots,b, 1, 2}_{B}, A',\underbrace{\dots, 2, b, 1, \dots}_{C^{-1}},B'$$
which means that it is a  $(2,2,2)$-face of first type.
The same holds for the case when $a, 1, 2$ are identified with $a', 2', 1'$, respectively. 
}\end{exmp}

{\it Case 2} (${\rm ev}+{\rm ev}$).
If $F$ and $F'$ are of even type, then the zigzags in $\Gamma$ and $\Gamma'$ are sequences
$$a,1,2,\dots,1,2,a,\dots,1,a,2,\dots$$
and
$$a',1',2',\dots,1',2',a',\dots,1',a',2',\dots$$
(respectively) and 
$$A=\{1, 2, \dots, 1, 2\},\;B=\{2, a, \dots, 1, a,\},\;C=\{a, 2, \dots, a, 1\},$$
$$A'=\{1', 2', \dots, 1', 2'\},\;B'=\{2', a', \dots, 1', a'\},\;C'=\{a', 2', \dots, a', 1'\}.$$
In contrast to the previous case, the connected sum is $z$-knotted if $a$ is not identified with $a'$
(Tab.2).
\begin{center}
\begin{tabular}{|C|C|C|C|C|}
\hline
\multicolumn{1}{|c|}{Identification} & {\multirow{2}{*}{Zigzag}} \\ 
\{g(a), g(1), g(2)\}           &       \\ \hline
\{1', 2', a'\}         & A, B', C, A', B^{-1}, C'^{-1} \\ \hline
\{1', a' ,2'\}         & A, C', C^{-1}, A'^{-1}, B, B'^{-1} \\ \hline
\{2', a', 1'\}         & A, B'^{-1}, C^{-1}, A', B, C' \\ \hline
\{2', 1', a'\}         & A, C'^{-1}, C, A'^{-1}, B^{-1}, B' \\ \hline
\end{tabular}
\captionof{table}{ }
\end{center}
In the case when $a$ is identified with $a'$, the connected sum contains precisely three zigzags and is not $z$-knotted.
If $1,2$ is identified with $1',2'$ (respectively), then these zigzags are 
$A, A'$ and $B, C'^{-1}$ and $C, B'^{-1}$.
If $1$ is identified with $2'$ and $2$ is identified with $1'$, then we get the zigzags
$A,A'^{-1}$ and $B,B'$ and $C,C'$.

\begin{exmp}{\rm
Suppose that $\Gamma$ is the $n$-bipyramid graph considered in Example 2, $n=2k+1$ and $k$ is even.
Then
$$A=\{1, 2, \dots, n-1, n, b, 1, 2\},\;B=\{2, a, \dots, n-1, b, n, 1, a\},$$
$$C=\{a, 2, \dots, b, n-1, n, a, 1\}.$$
If $a, 1, 2$ are identified with $1', 2', a'$ (respectively), then the unique zigzag in the connected sum is 
$A, B', C, A', B^{-1}, C'^{-1}$. 
The face containing $b, n-1, n$ appears in this zigzag as follows
$$\underbrace{\dots,n-1, n, b,\dots}_{A}, B',\underbrace{\dots,b, n-1, n, \dots}_{C},A', \underbrace{\dots,n, b, n-1,\dots}_{B^{-1}},C'^{-1},$$
i.e. this is a $(2,2,2)$-face of first type.
The same holds for the other three cases when $a$ is not identified with $a'$.
}\end{exmp}

{\it Case 3} (${\rm ev}+{\rm od}$).
If $F$ is of even type and $F'$ is of odd type, then 
$$A=\{1, 2, \dots, 1, 2\},\;B=\{2, a, \dots, 1, a,\},\;C=\{a, 2, \dots, a, 1\},$$
$$A'=\{1', 2', \dots, 1', a'\},\;B'=\{a', 2', \dots, 1', 2'\},\;C'=\{2', a', \dots, a', 1'\}.$$
The connected sum is $z$-knotted if $a$ is identified with $a'$ (Tab.3).
\begin{center}
\begin{tabular}{|C|C|C|C|C|}
\hline
\multicolumn{1}{|c|}{Identification} & {\multirow{2}{*}{Zigzag}} \\ 
\{g(a), g(1), g(2)\}           &       \\ \hline
\{a', 1', 2'\}         & A, A', C^{-1}, C', B^{-1}, B' \\ \hline
\{a', 2' ,1'\}         & A, B'^{-1}, C^{-1}, C'^{-1}, B^{-1}, A'^{-1} \\ \hline
\end{tabular}
\captionof{table}{ }
\end{center}
For each of the remaining four cases, the connected sum contains precisely two zigzags and is not $z$-knotted
(Tab.4).
\begin{center}
\begin{tabular}{|C|C|C|C|C|}
\hline
\multicolumn{1}{|c|}{Identification} & \multicolumn{2}{c|}{\multirow{2}{*}{Zigzags}} \\ 
\{g(a), g(1), g(2)\}           & \multicolumn{2}{l|}{}                         \\ \hline
\{1', 2', a'\}         & C, A' & A, C', B, B'^{-1} \\ \hline
\{1', a' ,2'\}         & B, A'^{-1} & A, B', C, C'^{-1} \\ \hline
\{2', a', 1'\}         & B, B' & A, A'^{-1}, C, C' \\ \hline
\{2', 1', a'\}         & C, B'^{-1} & A, C'^{-1}, B, A' \\ \hline
\end{tabular}
\captionof{table}{ }
\end{center}

\begin{rem}{\rm
Suppose that $\Gamma$ and $\Gamma'$ are the $n$-bipyramid and $n'$-bipyramid graphs (respectively),
where $n=2k+1$, $n'=2k'+1$, $k$ is odd and $k'$ is even. 
Then all faces in any $z$-knotted connected sum of $\Gamma$ and $\Gamma'$ are $(1,1,2)$-faces.
}\end{rem}

\subsection{Proof of the statement {\rm (2)}}
In this subsection, we suppose that $F$ is a $(2,2,2)$-face of first type.
We denote by $a,b,c$ the vertices of $F$ and assume that the orientation on $F$ defined by the zigzag coincides with 
the order of vertices in the sequence $a,b,c$.
Then the zigzag is a sequence 
$$a,b,c,\dots, c,a,b,\dots,b,c,a,\dots$$
and we have
$$A=\{b, c, \dots, c, a\},\;B=\{a, b, \dots, b, c\},\;C=\{c, a, \dots, a, b\}.$$
We show that for every homeomorphism $g:\partial F\to \partial F'$ the connected sum $\Gamma \#_{g} \Gamma'$ is $z$-knotted.

Consider the case when $F'$ is a $(1,1,2)$-face.
As in the previous subsection, we assume that $a'$ is the vertex which do not belong to the edges of second type,
$1'$ and $2'$ are the vertices on the edge of second type and the zigzag goes from $1'$ to $2'$.
If $F'$ is of odd type, then
$$A'=\{1', 2', \dots, 1', a'\},\;B'=\{a', 2', \dots, 1', 2'\},\; C'=\{2', a', \dots, a', 1'\}$$
and the zigzags in the corresponding connected sums are presented in Tab.5.

\begin{center}
\begin{tabular}{|C|C|C|C|C|}
\hline
\multicolumn{1}{|c|}{Identification} & {\multirow{2}{*}{Zigzag}} \\ 
\{g(a), g(b), g(c)\}           &       \\ \hline
\{a', 1', 2'\}         & A,C', B, A', C^{-1}, B' \\ \hline
\{a', 2', 1'\}         & A,C'^{-1},B,B'^{-1},C^{-1},A'^{-1} \\ \hline
\{1', 2', a'\}         & A,A'^{-1},C^{-1},C'^{-1},B^{-1},B'^{-1} \\ \hline
\{1', a', 2'\}         & A,B'^{-1},B^{-1},A'^{-1},C,C'^{-1} \\ \hline
\{2', a', 1'\}         & A,A',B^{-1},B',C,C' \\ \hline
\{2', 1', a'\}         & A,B',C^{-1},C',B^{-1},A' \\ \hline
\end{tabular}
\captionof{table}{ }
\end{center}
In the case when $F'$ is of even type, we have           
$$A'=\{1', 2', \dots, 1', 2'\},\;B'=\{2', a', \dots, 1', a'\},\;C'=\{a', 2', \dots, a', 1'\}$$
and the zigzags in the connected sums can be found in Tab.6.
\begin{center}
\begin{tabular}{|C|C|C|C|C|}
\hline
\multicolumn{1}{|c|}{Identification} & {\multirow{2}{*}{Zigzag}} \\ 
\{g(a), g(b), g(c)\}           &       \\ \hline
\{a', 1', 2'\}         & A,B', C^{-1},C',B,A' \\ \hline
\{a', 2', 1'\}         & A,C'^{-1},C^{-1},B'^{-1},B,A'^{-1} \\ \hline
\{1', 2', a'\}         & A,B'^{-1},B^{-1},A'^{-1},C^{-1},C'^{-1} \\ \hline
\{1', a', 2'\}         & A,A'^{-1},C,C'^{-1},B^{-1},B'^{-1} \\ \hline
\{2', a', 1'\}         & A,A',C,B',B^{-1},C' \\ \hline
\{2', 1', a'\}         & A,C',B^{-1},A',C^{-1},B' \\ \hline
\end{tabular}
\captionof{table}{ }
\end{center}

Suppose that $F'$ is a $(2,2,2)$-face whose vertices are $a',b',c'$ and 
the orientation coincides with the order of vertices in the sequence $a',b',c'$.
If $F'$ is of first type, then 
$$A'=\{b', c', \dots, c', a'\},\;B'=\{a', b', \dots, b', c'\},\;C'=\{c', a', \dots, a', b'\}$$
and the zigzags in the connected sums are described in Tab.7.
\begin{center}
\begin{tabular}{|C|C|C|C|C|}
\hline
\multicolumn{1}{|c|}{Identification} & {\multirow{2}{*}{Zigzag}} \\ 
\{g(a), g(b), g(c)\}           &       \\ \hline
\{a', b', c'\}         & A, C', B, A', C, B' \\ \hline
\{b', c', a'\}         & A, B', B, C', C, A' \\ \hline
\{c', a', b'\}         & A, A', B, B', C, C' \\ \hline
\{c', b', a'\}         & A, A'^{-1}, B, C'^{-1}, C, B'^{-1} \\ \hline
\{b', a', c'\}         & A, B'^{-1}, B, A'^{-1}, C, C'^{-1} \\ \hline
\{a', c', b'\}         & A, C'^{-1}, B, B'^{-1}, C, A'^{-1} \\ \hline
\end{tabular}
\captionof{table}{ }
\end{center}
If $F'$ is of second type, then the zigzag in $\Gamma'$ is a sequence
$$a',b',c',\dots,b',c',a',\dots,c',a',b',\dots$$
and 
$$A'=\{b', c', ..., b', c'\},\;B'=\{c', a', ..., c', a'\},\;C'=\{a', b', ..., a', b'\}.$$
The zigzags in the connected sums are presented in  Tab.8.
\begin{center}
\begin{tabular}{|C|C|C|C|C|}
\hline
\multicolumn{1}{|c|}{Identification} & {\multirow{2}{*}{Zigzag}} \\ 
\{g(a), g(b), g(c)\}           &       \\ \hline
\{a', b', c'\}         & A, B', C, C', B, A' \\ \hline
\{b', c', a'\}         & A, C', C, A', B, B' \\ \hline
\{c', a', b'\}         & A, A', C, B', B, C' \\ \hline
\{c', b', a'\}         & A, B'^{-1}, C, A'^{-1}, B, C'^{-1} \\ \hline
\{b', a', c'\}         & A, A'^{-1}, C, C'^{-1}, B, B'^{-1} \\ \hline
\{a', c', b'\}         & A, C'^{-1}, C, B'^{-1}, B, A'^{-1} \\ \hline
\end{tabular}
\captionof{table}{ }
\end{center}

\subsection{Proof of the statement {\rm (3)}}
In this subsection, we suppose that $F$ is a $(2,2,2)$-face of second type.
As in the previous subsection, we denote by $a,b,c$ the vertices of $F$ and 
assume that the orientation on $F$ defined by the zigzag coincides with  the order of vertices in the sequence $a,b,c$.
Then
$$A=\{b, c, ..., b, c\},\;B=\{c, a, ..., c, a\},\;C=\{a, b, ..., a, b\}.$$
If $F'$ is a $(2,2,2)$-face of first type, then  the connected sum $\Gamma \#_{g} \Gamma'$ is $z$-knotted for 
any homeomorphism $g:\partial F \to \partial F'$ (see the previous subsection).
If $F'$ is a $(2,2,2)$-face of second type, then 
$$A'=\{b', c', ..., b', c'\},\;B'=\{c', a', ..., c', a'\},\;C'=\{a', b', ..., a', b'\}.$$
In this case, for every of the segments $A,B,C,A',B',C'$ the first two vertices coincides with the last two vertices.
Therefore, for any homeomorphism $g:\partial F \to \partial F'$  all zigzags in the connected sum $\Gamma \#_{g} \Gamma'$ are of type $X,X'$,
where 
$$X\in \{A,B,C,A^{-1},B^{-1},C^{-1}\}\;\mbox{ and }\; X'\in \{A',B',C',A'^{-1},B'^{-1},C'^{-1}\}.$$
So, the connected sum is not $z$-knotted. 

Consider the case when $F'$ is a $(1,1,2)$-face whose vertices are $a',1',2'$ and 
$a'$ is the vertex which does not belong to the edge of second type.
As above, we suppose that the zigzag goes from $1'$ to $2'$.
If $F'$ is of odd type, then 
$$A'=\{1', 2', ..., 1', a'\},\;B'=\{a', 2', ..., 1', 2'\},\;C=\{2', a', ..., a', 1'\}$$
and for every homeomorphism $g:\partial F\to \partial F'$ the connected sum $\Gamma \#_{g} \Gamma'$ is $z$-knotted (Tab. 9).
\begin{center}
\begin{tabular}{|C|C|C|C|C|}
\hline
\multicolumn{1}{|c|}{Identification} & {\multirow{2}{*}{Zigzag}} \\ 
\{g(a), g(b), g(c)\}           &       \\ \hline
\{a', 1', 2'\}         & A,A',C^{-1},C'^{-1},B^{-1},B' \\ \hline
\{a', 2', 1'\}         & A,B'^{-1},C^{-1},C',B^{-1},A'^{-1} \\ \hline
\{1', 2', a'\}         & A, C',B,A'^{-1},C^{-1},B'^{-1} \\ \hline
\{1', a', 2'\}         & A,B',B^{-1}, A',C,C'^{-1} \\ \hline
\{2', a', 1'\}         & A,A'^{-1},B^{-1},B'^{-1},C,C' \\ \hline
\{2', 1', a'\}         & A,C'^{-1},B,B',C^{-1},A' \\ \hline
\end{tabular}
\captionof{table}{ }
\end{center}
If $F'$ is of even type, then 
$$A'=\{1', 2', \dots, 1', 2'\},\;B'=\{2', a', \dots, 1', a',\},\;C'=\{a', 2', \dots, a', 1'\}.$$
and for every homeomorphism $g:\partial F\to \partial F'$ the connected sum $\Gamma \#_{g} \Gamma'$ is not $z$-knotted (Tab.10).
\begin{center}
\begin{tabular}{|C|C|C|C|C|}
\hline
\multicolumn{1}{|c|}{Identification} & \multicolumn{2}{c|}{\multirow{2}{*}{Zigzags}} \\ 
\{g(a), g(b), g(c)\}           & \multicolumn{2}{l|}{}                         \\ \hline
\{a', 1', 2'\}         & A, A' & B, B', C^{-1}, C'^{-1} \\ \hline
\{a', 2', 1'\}         & A, A'^{-1} & B, C'^{-1}, C^{-1}, B' \\ \hline
\{1', 2', a'\}         & C, A' & A, B', B^{-1}, C'^{-1} \\ \hline
\{1', a', 2'\}         & B, A'^{-1} & A, C', C^{-1}, B'^{-1} \\ \hline
\{2', a', 1'\}         & B, A' & A, B'^{-1}, C^{-1}, C' \\ \hline
\{2', 1', a'\}         & C, A'^{-1} & A, C'^{-1}, B^{-1}, B' \\ \hline
\end{tabular}
\captionof{table}{ }
\end{center}

\section{$(2,2,2)$-faces of second type}
Consider the bipyramid graph $BP_{n}$, where $n=2k$ and $k$ is odd.
As in Example 2, we denote by $1,\dots,n$ the vertices on the $n$-gone
and write $a,b$ for the remaining two vertices.
There precisely two zigzags in $BP_{n}$. These  are the sequences 
$$a, 1, 2, b, 3, 4, \dots, a, n-1, n, b, 1, 2, a, 3, 4, \dots, b, n-1, n$$
and
$$a, 2, 3, b, \dots, b, n-2, n-1, a, n, 1, b, 2, 3, a, \dots, a, n-2, n-1, b, n, 1.$$
Let $F$ be the face containing $a,1,2$.
Following Subsection 6.1, we present zigzags as the unions of segments
which are parts of zigzags between two edges of $F$, i.e.
each of these segments contains precisely two edges of $F$.
The first zigzag is the sum of the following two segments
$$A=\{1, 2, \dots, 1, 2\}\;\mbox{ and }\;B=\{2, a, 3, \dots, a, 1\}.$$
The second zigzag passes ones through the edges $a1$ and $a2$, and it does not contain the edge $12$.
So, it will be identified with the segment 
$$C=\{a, 2, 3, \dots, 2, 3, a, \dots, 1, a\}.$$

Now, we take the bipyramid graph $BP_{n'}$, where $n'=2k'$ and $k'$ is odd.
Let $1',\dots,n'$ be the vertices on the $n'$-gone, and let $a',b'$ be the remaining two vertices.
Denote by $F'$ the face containing $a',1',2'$.
As above, one of the zigzags is the union of two segments $A',B'$ and the other zigzag is identified with the segment $C'$.

We identify the vertices $a, 1, 2$ with the vertices $2', a', 1'$ (respectively).
The corresponding connected sum of $BP_{n}$ and $BP_{n'}$ contains the unique zigzag
$$A, C'^{-1}, C^{-1}, A', B, B',$$
i.e. it is $z$-knotted. 
The face containing $a, 2, 3$ appears in this zigzag as follows
$$A, C'^{-1}, \underbrace{\dots, a, 3, 2, \dots, 3, 2, a}_{C^{-1}}, A', \underbrace{2, a, 3, \dots}_{B}, B'.$$
This is a $(2,2,2)$-face of second type.

\end{document}